\newcommand\shorttitle{A matrix Bougerol identity and the Hua-Pickrell measures}
\newcommand\authors{T. Assiotis}
\ifodd\value{page}
\authors
\shorttitle
\newtheorem{thm}{Theorem}[section]
\newtheorem{lem}[thm]{Lemma}
\newtheorem{rmk}[thm]{Remark}
\newtheorem{prop}[thm]{Proposition}
\title{\large \bf A MATRIX BOUGEROL IDENTITY AND THE HUA-PICKRELL MEASURES}
\author{\small THEODOROS ASSIOTIS}
\date{}
\begin{document}

\maketitle

\begin{abstract}
We prove a Hermitian matrix version of Bougerol's identity. Moreover, we construct the Hua-Pickrell measures on Hermitian matrices, as stochastic integrals with respect to a drifting Hermitian Brownian motion and with an integrand involving a conjugation by an independent, matrix analogue of the exponential of a complex Brownian motion with drift.
\end{abstract} 

\section{Introduction}

We begin this introduction, by recalling Bougerol's celebrated identity, first established in \cite{Bougerol} in his study of convolution powers of probabilities on certain solvable groups. Let $\left(\beta_t; t \ge 0\right)$ and $\left(\gamma_t; t \ge 0\right)$ be two independent standard Brownian motions starting from $0$. Then, for \textit{fixed} $t \ge 0$, we have the following equalities in law,
\begin{align}\label{Bougerol1D}
\sinh\left(\beta_t\right)\overset{\textnormal{law}}{=}\int_{0}^{t}e^{\beta_s}d\gamma_s\overset{\textnormal{law}}{=}\gamma\left(\int_{0}^{t}e^{2\beta_s}ds\right).
\end{align}

Moreover, if we denote by $\left(\beta_t^{(-\nu)};t \ge 0\right)$ and $\left(\gamma_t^{(-\mu)};t \ge 0\right)$ two independent standard Brownian motions with drifts $-\nu$ and $-\mu$ respectively, the law of the functional, for $\nu>0$,
\begin{align}\label{functional}
\int_{0}^{\infty}e^{\beta_t^{(-\nu)}}d\gamma_t^{(-\mu)}
\end{align}
has density, with respect to Lebesgue measure, given by,
\begin{align*}
f_{\nu,\mu}(x)=\textnormal{c}_{\nu,\mu}  \frac{e^{-2\mu \arctan(x)}}{\left(1+x^2\right)^{\nu+\frac{1}{2}}}.
\end{align*}
Note that this belongs to the much-studied type IV family of Pearson distributions. Both these statements, have been given simple and quite elegant diffusion theoretic proofs by Marc Yor and co-authors in \cite{AliliDufresneYor} and \cite{Yoretal} respectively (see also Marc Yor's monograph \cite{YorExponential} and the survey \cite{VakeroudisSurvey} for more recent developments). The purpose of this note is to obtain the Hermitian matrix analogues of these results. We will establish these by adapting the strategy in the references above to the matrix setting. The real crux here, is understanding what the right matrix analogue should be.

We should also mention that, Marc Yor had an ongoing program for some time, trying to obtain higher dimensional generalizations of Bougerol's identity and study their ramifications (\cite{Bougerolcommunication}). In the last few years, some interesting progress was made in his joint work with Bertoin and Dufresne (\cite{BertoinYorDufresne}), where a generalization involving a (still) one-dimensional process and its local time was discovered. However, our contribution provides the first truly multi-dimensional extension, moreover making a connection between stochastic analysis and the celebrated Hua-Pickrell measures coming from random matrix theory and harmonic analysis on groups. 

 Before continuing, let us explain a bit further the initial motivation behind the study undertaken here. There is a closely related and equally well-known identity in one dimension, originally proven by Dufresne in \cite{Dufresne}: Consider the functional,
\begin{align*}
a_t^{(-\nu)}=\int_{0}^{t}e^{2\beta_s^{(-\nu)}}ds.
\end{align*}
Then, for $\nu>0$,
\begin{align}\label{Dufresneidentity}
a_{\infty}^{(-\nu)}\overset{\textnormal{law}}{=}\frac{1}{2\xi_{\nu}}
\end{align}
where $\xi_{\nu}$ is a Gamma distributed random variable with density $\frac{1}{\Gamma(\nu)}x^{\nu-1}e^{-x}$. Recently, Rider and Valko in \cite{RiderValko} have proven a matrix version of this result, obtaining in place of an inverse Gamma random variable, the inverse Wishart laws. The present paper grew out of my attempt, to both better understand their result and investigate whether other well known matrix laws can be constructed by this diffusion theoretic approach, or "Dufresne procedure" as referred to in \cite{RiderValko}. We finally note that, the second equality in law in (\ref{Bougerol1D}), obtained by a time-change, that links Bougerol's and Dufresne's (one-dimensional) identities, does not appear to have a matrix counterpart.

In order to proceed to state our results, we first need to introduce the Hermitian analogues of the Pearson distribution, of $\left(e^{\beta_t}; t \ge 0\right)$ and $\left(\sinh\left(\beta_t\right); t \ge 0\right)$.

We consider the following measure, denoted by $\mathsf{M}_{HP}^{s,N}$, on the space $\textbf{H}(N)$, of $N\times N$ Hermitian matrices, with $s$ being a complex parameter such that $\Re(s)>-\frac{1}{2}$,
\begin{align}\label{MatrixHuaPickrellMeasure}
\mathsf{M}_{HP}^{s,N}(d\boldsymbol{X})=\textnormal{const} \times \det\left((\boldsymbol{I}+i\boldsymbol{X})^{-s-N}\right)\det \left((\boldsymbol{I}-i\boldsymbol{X})^{-\bar{s}-N}\right) \times d\boldsymbol{X},
\end{align}
where $d\boldsymbol{X}$ denotes Lebesgue measure on $\textbf{H}(N)$. The restriction $\Re(s)>-\frac{1}{2}$ is so that the measure $\mathsf{M}_{HP}^{s,N}$ can be normalized to a probability measure. Its significance in terms of the stochastic processes we shall consider will also be clarified in Lemma \ref{growth} below.

Looking at the radial part of $\mathsf{M}_{HP}^{s,N}(d\boldsymbol{X})$ we get a probability measure on the Weyl chamber $W^N=\{(x_1,\cdots,x_N)\in \mathbb{R}^N:x_1 \le x_2 \le \cdots \le x_N \}$ of log-gas type, which we will denote by $\mu^{s,N}_{HP}$, and is given explicitly by,
\begin{align}\label{HuaPickrellmeasuresevals}
\mu^{s,N}_{HP}(dx)&= \textnormal{const} \times \Delta^2_N(x)\prod_{j=1}^{N}(1+ix_j)^{-s-N}(1-ix_j)^{-\bar{s}-N}dx_j \nonumber\\
&=\textnormal{const} \times \Delta^2_N(x)\prod_{j=1}^{N}(1+x^2_j)^{-\Re(s)-N}e^{2\Im(s)\arg(1+ix_j)}dx_j,
\end{align}
where $x=\left(x_1,\cdots,x_N\right)$ and $\Delta_N(x)=\prod_{1 \le i< j\le N}^{}(x_j-x_i)$ is the Vandermonde determinant. 

Before introducing our stochastic dynamics, we briefly give some of the history of the measures $\mathsf{M}_{HP}^{s,N}$. They were first introduced by Hua Luogeng in the $50$'s in his monograph \cite{Hua} on harmonic analysis in several complex variables and were later in the $80$'s rediscovered independently by Pickrell \cite{Pickrellmeasure} in the context of Grassmann manifolds. Around the turn of the millennium, they were further studied by Neretin in \cite{Neretin} and Borodin and Olshanski investigated their $N \to \infty$ limits as determinantal point processes in \cite{BorodinOlshanskiErgodic}. The reader is referred to \cite{BorodinOlshanskiErgodic} and the more recent study \cite{BufetovQiu} for more of their truly remarkable properties.

We now move on to the matrix stochastic processes we will be dealing with. First some notation. We will denote by $\boldsymbol{A}^{\dagger}$ the complex conjugate of a matrix $\boldsymbol{A}$ and in case it is invertible we write $\boldsymbol{A}^{-\dagger}$ for $\left(\boldsymbol{A}^{\dagger}\right)^{-1}$ and also write $\textnormal{Tr}(\boldsymbol{A})$ for the trace of $\boldsymbol{A}$. Throughout this paper, $\left(\boldsymbol{W}_t;t \ge0\right)$ will be an $N\times N$ complex Brownian matrix. More precisely, its entries consist of independent (scalar) complex Brownian motions.

 We will denote by $\left(\boldsymbol{M}^{(\nu)}_t;t \ge 0\right)$ the matrix analogue of the exponential of complex Brownian motion with drift $\nu$ (the choice of the diffusivity constant is dictated once we fix the normalization of the equation (\ref{matrixhuapickrelldiffusion}) below), given by the solution to the following matrix Stochastic Differential Equation (SDE), starting from $\boldsymbol{M}^{(\nu)}_0=\boldsymbol{I}$,
\begin{align*}
d\boldsymbol{M}^{(\nu)}_t&=\frac{1}{\sqrt{2}}\boldsymbol{M}^{(\nu)}_td\boldsymbol{W}_t+\nu \boldsymbol{M}^{(\nu)}_tdt.
\end{align*}

Moreover, consider the following matrix SDE taking values in $\textbf{H}(N)$ (if $\boldsymbol{X}_0 \in \textbf{H}(N)$), where $\left(\boldsymbol{\Gamma}_t;t \ge 0\right)$ denotes a complex Brownian matrix,

\begin{align}\label{matrixhuapickrelldiffusion}
d\boldsymbol{X}_t=d\boldsymbol{\Gamma}_t\sqrt{\frac{\boldsymbol{I}+\boldsymbol{X}_t^2}{2}}+\sqrt{\frac{\boldsymbol{I}+\boldsymbol{X}_t^2}{2}}d\boldsymbol{\Gamma}_t^{\dagger}+\left[(-N-2\Re(s))\boldsymbol{X}_t+2\Im(s)\boldsymbol{I}+ \textnormal{Tr}\left(\boldsymbol{X}_t\right)\boldsymbol{I}\right]dt.
\end{align}
This is a Hermitian analogue of (a general version of)  $\sinh(\beta_t)$. To see the analogy more clearly, note that, 
\begin{align*}
d\sinh(\beta_t)=\left(1+\sinh^2(\beta_t)\right)^{\frac{1}{2}}d\beta_t+\frac{1}{2}\sinh(\beta_t)dt.
\end{align*}
Hence, to arrive at (\ref{matrixhuapickrelldiffusion}) we simply replaced the scalar (quadratic, with no real roots) diffusion and (linear) drift coefficients by their (symmetrized) matrix analogues. The appearance of the trace drift term is natural and can partly be explained by the calculations required in Propositions \ref{explicitsolution} and \ref{uniqueinvariant} below. Moreover, our choice of both drift and diffusivity constants, is so that (\ref{matrixhuapickrelldiffusion}) has both $\mathsf{M}_{HP}^{s,N}$ as its unique invariant measure and its eigenvalue evolution satisfies a stochastic equation with a certain normalization; this is made precise in Proposition \ref{uniqueinvariant} and its proof. 

One final piece of notation; we will write throughout $\left(\boldsymbol{B}^{(\mu)}_t;t \ge 0\right)$ for a drifting complex Brownian matrix with drift $\mu \in \mathbb{R}$, given by,
\begin{align*}
\boldsymbol{B}^{(\mu)}_t=\boldsymbol{B}_t+\mu \boldsymbol{I}t
\end{align*}
for a complex Brownian matrix $\left(\boldsymbol{B}_t;t \ge 0\right)$ which is \textit{independent} of $\left(\boldsymbol{W}_t;t \ge 0\right)$.

We are now ready to state our two main results. First, the law of the Hermitian analogue of the functional (\ref{functional}), is given by the Hua-Pickrell measure $\mathsf{M}_{HP}^{s,N}$.

\begin{thm}\label{FunctionalHuaPickrell}
Let $\Re(s)>-\frac{1}{2}$. With $\nu=\Re(s)+\frac{N}{2}, \mu=\sqrt{2}\Im(s)$, then,
\begin{align}
\int_{0}^{\infty}\boldsymbol{M}^{\left(-\nu\right)}_t\left(\frac{d\boldsymbol{B}^{(\mu)}_t+d\left(\boldsymbol{B}^{(\mu)}_t\right)^{\dagger}}{\sqrt{2}}\right)\left(\boldsymbol{M}^{\left(-\nu\right)}_t\right)^{\dagger}
\end{align}
is distributed as $\mathsf{M}_{HP}^{s,N}$.
\end{thm}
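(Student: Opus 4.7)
The plan is to show that the law of
\begin{align*}
\boldsymbol{F}_\infty := \int_0^\infty \boldsymbol{M}_t^{(-\nu)}\,d\boldsymbol{C}_t\,(\boldsymbol{M}_t^{(-\nu)})^\dagger, \qquad d\boldsymbol{C}_t := \tfrac{1}{\sqrt{2}}\left(d\boldsymbol{B}_t^{(\mu)} + d(\boldsymbol{B}_t^{(\mu)})^\dagger\right),
\end{align*}
is invariant for the Hua-Pickrell SDE (\ref{matrixhuapickrelldiffusion}), and then conclude by the uniqueness of the invariant measure established in Proposition \ref{uniqueinvariant}. Almost-sure convergence of $\boldsymbol{F}_\infty$ should follow from Lemma \ref{growth} combined with a Burkholder-Davis-Gundy estimate, using that $\nu = \Re(s)+N/2 > (N-1)/2$ forces sufficient decay of $\boldsymbol{M}_t^{(-\nu)}$.

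The key preliminary is the multiplicative Markov (cocycle) identity $\boldsymbol{M}_{t+u}^{(-\nu)} = \boldsymbol{M}_t^{(-\nu)} \boldsymbol{N}_u^{(-\nu)}$, where $\boldsymbol{N}^{(-\nu)}$ is an independent copy of $\boldsymbol{M}^{(-\nu)}$. Combined with the independence of future Brownian increments of $\boldsymbol{B}^{(\mu)}$, this yields the stochastic fixed-point identity
\begin{align*}
\boldsymbol{F}_\infty \overset{\textnormal{law}}{=} \boldsymbol{F}_t + \boldsymbol{M}_t^{(-\nu)}\,\boldsymbol{F}_\infty'\,(\boldsymbol{M}_t^{(-\nu)})^\dagger,
\end{align*}
with $\boldsymbol{F}_\infty'$ independent of $\mathcal{F}_t$ and distributed as $\boldsymbol{F}_\infty$. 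Defining the process $\boldsymbol{Z}_t := \boldsymbol{F}_t + \boldsymbol{M}_t^{(-\nu)}\,\boldsymbol{F}_\infty'\,(\boldsymbol{M}_t^{(-\nu)})^\dagger$, one has $\boldsymbol{Z}_t \overset{\textnormal{law}}{=} \boldsymbol{F}_\infty$ for every $t \ge 0$, so in particular $\tfrac{d}{dt}E[f(\boldsymbol{Z}_t)]\big|_{t=0} = 0$ for every smooth test function $f$ on $\textbf{H}(N)$.

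The heart of the proof is an It\^o expansion of $d\boldsymbol{Z}_t$ at $t=0$, where $\boldsymbol{M}_0^{(-\nu)} = \boldsymbol{I}$ makes the algebra transparent. The two fundamental identities for the complex Brownian matrix are $d\boldsymbol{W}_{ij}\,d\boldsymbol{W}_{kl} = 0$ and $d\boldsymbol{W}_{ij}\,d\bar{\boldsymbol{W}}_{kl} = 2\delta_{ik}\delta_{jl}\,dt$ (and the same for $\boldsymbol{B}$). The first kills all self-covariations of the holomorphic Brownian differentials; the second yields the crucial matrix identity $d\boldsymbol{W}_t\,\boldsymbol{F}_\infty'\,d\boldsymbol{W}_t^\dagger = 2\,\textnormal{Tr}(\boldsymbol{F}_\infty')\,\boldsymbol{I}\,dt$, which is the source of the trace-drift in (\ref{matrixhuapickrelldiffusion}). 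With $\nu = \Re(s)+N/2$ and $\mu = \sqrt{2}\,\Im(s)$, the drift of $d\boldsymbol{Z}_t$ at $t=0$ assembles into
\begin{align*}
-(N + 2\Re(s))\,\boldsymbol{F}_\infty' + 2\Im(s)\,\boldsymbol{I} + \textnormal{Tr}(\boldsymbol{F}_\infty')\,\boldsymbol{I},
\end{align*}
which is exactly the drift of (\ref{matrixhuapickrelldiffusion}) evaluated at $\boldsymbol{X} = \boldsymbol{F}_\infty'$. A parallel entrywise calculation, using the analogous covariation identities for $\boldsymbol{B}$, shows that the quadratic covariation of the martingale part of $d\boldsymbol{Z}_t|_{t=0}$ reorganizes entry-by-entry to coincide with that of the symmetrized diffusion $d\boldsymbol{\Gamma}_t\sqrt{(\boldsymbol{I}+\boldsymbol{X}^2)/2} + \sqrt{(\boldsymbol{I}+\boldsymbol{X}^2)/2}\,d\boldsymbol{\Gamma}_t^\dagger$ in (\ref{matrixhuapickrelldiffusion}) at $\boldsymbol{X} = \boldsymbol{F}_\infty'$.

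Combining these, It\^o's formula and the stationarity $\tfrac{d}{dt}E[f(\boldsymbol{Z}_t)]\big|_{t=0} = 0$ yield $E[L_{HP}f(\boldsymbol{F}_\infty)] = 0$ for every smooth test $f$, where $L_{HP}$ is the generator of (\ref{matrixhuapickrelldiffusion}). Therefore $\textnormal{Law}(\boldsymbol{F}_\infty)$ is invariant for (\ref{matrixhuapickrelldiffusion}), and Proposition \ref{uniqueinvariant} identifies it with $\mathsf{M}_{HP}^{s,N}$. The main technical obstacle is the matrix-valued It\^o calculation in the previous paragraph: the trace-drift must appear precisely from the $d\boldsymbol{W}\,\boldsymbol{F}_\infty'\,d\boldsymbol{W}^\dagger$ covariation, and the matching of the Hermitian martingale's quadratic covariation with the signature Hua-Pickrell diffusion $\sqrt{(\boldsymbol{I}+\boldsymbol{X}^2)/2}$ requires a careful entry-by-entry calculation. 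The specific normalizations $\nu = \Re(s)+N/2$ and $\mu = \sqrt{2}\,\Im(s)$ in the statement of the theorem are dictated exactly by this matching.
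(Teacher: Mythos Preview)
Your approach is correct and reaches the same conclusion, but it takes a genuinely different route from the paper's proof. The paper first establishes the matrix Bougerol identity (Theorem~\ref{MatrixBougerolTheorem}) by combining the explicit solution formula of Proposition~\ref{explicitsolution} with the time-reversal Lemma~\ref{timereversal}, and then lets $t\to\infty$, invoking Lemma~\ref{growth} for the almost-sure convergence and Proposition~\ref{uniqueinvariant} to identify the limit. You instead bypass Theorem~\ref{MatrixBougerolTheorem} and the time-reversal step entirely: you use the multiplicative cocycle property $\boldsymbol{M}^{(-\nu)}_{t+u}=\boldsymbol{M}^{(-\nu)}_{t}\boldsymbol{N}^{(-\nu)}_{u}$ to derive a perpetuity-type fixed-point equation for $\boldsymbol{F}_\infty$, and then verify directly that its law is infinitesimally invariant for the generator $L_{HP}$ by differentiating at $t=0$. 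The technical core---the It\^o computation producing the trace drift from $d\boldsymbol{W}\,\boldsymbol{X}\,d\boldsymbol{W}^\dagger = 2\,\textnormal{Tr}(\boldsymbol{X})\boldsymbol{I}\,dt$ and the entrywise matching of the martingale's quadratic covariation to the diffusion coefficient $\sqrt{(\boldsymbol{I}+\boldsymbol{X}^2)/2}$---is essentially the same as in the paper's proof of Proposition~\ref{explicitsolution}, only evaluated at $t=0$ (with $\boldsymbol{M}_0=\boldsymbol{I}$) rather than carried out for all $t$. What your route buys is economy: you go straight to Theorem~\ref{FunctionalHuaPickrell} without proving the Bougerol identity along the way. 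What the paper's route buys is precisely that identity, Theorem~\ref{MatrixBougerolTheorem}, which is of independent interest and is in fact the paper's other main result.

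One small point to tighten: the step ``$E[L_{HP}f(\boldsymbol{F}_\infty)]=0$ for all smooth $f$ implies $\textnormal{Law}(\boldsymbol{F}_\infty)$ is invariant for the semigroup'' is not entirely automatic. It holds here because the diffusion in (\ref{matrixhuapickrelldiffusion}) is uniformly elliptic with smooth coefficients (so, for instance, smooth compactly supported functions form a core for the generator, or one may appeal to hypoellipticity of $\partial_t-L_{HP}^*$ as the paper does at the end of the proof of Proposition~\ref{uniqueinvariant}), but you should say so explicitly.
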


\begin{rmk}
Comparing with \cite{RiderValko}, the matrix analogue of Dufresne's identity is given by,
\begin{align*}
\int_{0}^{\infty}\boldsymbol{M}^{\left(-\nu\right)}_tdt\left(\boldsymbol{M}^{\left(-\nu\right)}_t\right)^{\dagger}
\end{align*}
which is distributed as an inverse Wishart random matrix. To obtain the Hua-Pickrell measures, we have replaced the $dt$ integration by a stochastic integral with respect to an independent (drifting) Hermitian Brownian motion, $\left(\boldsymbol{B}^{(\mu)}_t+\left(\boldsymbol{B}^{(\mu)}_t\right)^{\dagger};t \ge 0\right)$.
\end{rmk}

Finally, we have the following Hermitian version of Bougerol's identity (\ref{Bougerol1D}).

\begin{thm}\label{MatrixBougerolTheorem}
With $\nu=\Re(s)+\frac{N}{2}, \mu=\sqrt{2}\Im(s)$, denote by $\tilde{\boldsymbol{X}}_t^{\mu, \nu}$ the unique solution of (\ref{matrixhuapickrelldiffusion}) starting from the $\boldsymbol{0}$ matrix. Then, for fixed $t>0$,
\begin{align}
\tilde{\boldsymbol{X}}_t^{\mu, \nu}\overset{law}{=}\int_{0}^{t}\boldsymbol{M}^{\left(-\nu\right)}_u\left(\frac{d\boldsymbol{B}^{(\mu)}_u+d\left(\boldsymbol{B}^{(\mu)}_u\right)^{\dagger}}{\sqrt{2}}\right)\left(\boldsymbol{M}^{\left(-\nu\right)}_u\right)^{\dagger}.
\end{align}
\end{thm}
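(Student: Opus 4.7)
The plan is to adapt the time-reversal proof of the one-dimensional Bougerol identity from Alili--Dufresne--Yor \cite{AliliDufresneYor} to the Hermitian matrix setting. The strategy is to use the cocycle structure of the matrix GBM $\boldsymbol{M}^{(-\nu)}$ to reverse the stochastic integral in time, and then identify the resulting process as a weak solution of (\ref{matrixhuapickrelldiffusion}) via It\^o's formula and uniqueness.

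Let $\boldsymbol{J}_T$ denote the stochastic integral in the statement of the theorem, and set $H_u := \tfrac{1}{\sqrt{2}}\bigl(\boldsymbol{B}_u^{(\mu)} + (\boldsymbol{B}_u^{(\mu)})^{\dagger}\bigr)$. Fix $T > 0$ and introduce the time-reversed complex Brownian matrix $\hat{\boldsymbol{W}}_v := \boldsymbol{W}_T - \boldsymbol{W}_{T-v}$ and the time-reversed drifting Hermitian BM $\tilde H_v := H_T - H_{T-v}$, mutually independent by the independence of $\boldsymbol{W}$ and $\boldsymbol{B}$. A short It\^o computation shows that the time-reversed GBM $\boldsymbol{\Psi}_v := (\boldsymbol{M}_{T-v}^{(-\nu)})^{-1}\boldsymbol{M}_T^{(-\nu)}$ satisfies the left-multiplicative SDE $d\boldsymbol{\Psi}_v = \tfrac{1}{\sqrt{2}}\,d\hat{\boldsymbol{W}}_v\,\boldsymbol{\Psi}_v - \nu \boldsymbol{\Psi}_v\,dv$ with $\boldsymbol{\Psi}_0 = \boldsymbol{I}$ and $\boldsymbol{\Psi}_T = \boldsymbol{M}_T^{(-\nu)}$. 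Substituting the cocycle identity $\boldsymbol{M}_u^{(-\nu)} = \boldsymbol{M}_T^{(-\nu)}\boldsymbol{\Psi}_{T-u}^{-1}$ into $\boldsymbol{J}_T$ and changing variable $v = T - u$---valid pathwise for each realization of $\boldsymbol{W}$, since the integrand is then deterministic while the integrator $H$ only depends on the independent process $\boldsymbol{B}$---yields the almost sure identity
\begin{align*}
\boldsymbol{J}_T = \boldsymbol{\Psi}_T\left[\int_0^T \boldsymbol{\Psi}_v^{-1}\,d\tilde H_v\,\boldsymbol{\Psi}_v^{-\dagger}\right]\boldsymbol{\Psi}_T^{\dagger}.
\end{align*}

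Next, set $\boldsymbol{V}_t := \boldsymbol{\Psi}_t\bigl[\int_0^t \boldsymbol{\Psi}_v^{-1}\,d\tilde H_v\,\boldsymbol{\Psi}_v^{-\dagger}\bigr]\boldsymbol{\Psi}_t^{\dagger}$, so that $\boldsymbol{V}_0 = \boldsymbol{0}$ and $\boldsymbol{V}_T = \boldsymbol{J}_T$ almost surely. Applying It\^o's formula, using the independence of $\hat{\boldsymbol{W}}$ and $\tilde H$ to kill most cross-brackets, together with the complex-Brownian identity $d\hat{\boldsymbol{W}}_t\,\boldsymbol{A}\,d\hat{\boldsymbol{W}}_t^{\dagger} = 2\,\textnormal{Tr}(\boldsymbol{A})\,\boldsymbol{I}\,dt$ which produces the trace drift, and substituting $2\nu = N + 2\Re(s)$, $\sqrt{2}\mu = 2\Im(s)$, one obtains
\begin{align*}
d\boldsymbol{V}_t = \tfrac{1}{\sqrt{2}}\,d\hat{\boldsymbol{W}}_t\,\boldsymbol{V}_t + \tfrac{1}{\sqrt{2}}\boldsymbol{V}_t\,d\hat{\boldsymbol{W}}_t^{\dagger} + d\tilde H_t + \bigl[-(N + 2\Re(s))\boldsymbol{V}_t + \textnormal{Tr}(\boldsymbol{V}_t)\boldsymbol{I} + 2\Im(s)\boldsymbol{I}\bigr]\,dt.
\end{align*}
The drift matches that of (\ref{matrixhuapickrelldiffusion}) exactly. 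To match the diffusion coefficient, I would verify entry-wise that the quadratic covariation tensor of the martingale part of $d\boldsymbol{V}_t$ with its conjugate equals $dt\cdot[\delta_{ik}(\boldsymbol{V}^2)_{jl} + \delta_{jl}(\boldsymbol{V}^2)_{ik} + 2\delta_{ik}\delta_{jl}]$, that the same tensor arises from $d\boldsymbol{\Gamma}_t\sqrt{(\boldsymbol{I}+\boldsymbol{V}_t^2)/2} + \sqrt{(\boldsymbol{I}+\boldsymbol{V}_t^2)/2}\,d\boldsymbol{\Gamma}_t^{\dagger}$, and analogously for the non-conjugate covariation. By martingale representation, $\boldsymbol{V}_t$ is then a weak solution of (\ref{matrixhuapickrelldiffusion}) starting from $\boldsymbol{0}$, and weak uniqueness of this SDE yields $\boldsymbol{V}_T \overset{\textnormal{law}}{=} \tilde{\boldsymbol{X}}_T^{\mu,\nu}$, proving the theorem.

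The main obstacle is the final quadratic-variation match: in the scalar case this collapses to the algebraic identity $1 + \sinh^2 = \cosh^2$, but in the matrix case one must verify that the two distinct sources of noise in $d\boldsymbol{V}_t$---$\hat{\boldsymbol{W}}$ acting by conjugation on both sides of $\boldsymbol{V}_t$ and $\tilde H$ entering additively---combine non-commutatively to reproduce exactly the tensor arising from the symmetric form $\sqrt{(\boldsymbol{I}+\boldsymbol{V}^2)/2}$ sandwiching a single complex Brownian matrix $\boldsymbol{\Gamma}$ from both sides. The prefactors $1/\sqrt{2}$ and the factor $1/2$ inside the square root in (\ref{matrixhuapickrelldiffusion}) are calibrated precisely to make this match work.
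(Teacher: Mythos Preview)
Your proposal is correct and takes essentially the same approach as the paper: the paper first proves (Proposition~\ref{explicitsolution}) that $(\boldsymbol{M}^{(\nu)}_t)^{-1}\bigl[\int_0^t \boldsymbol{M}^{(\nu)}_u\,dH_u\,(\boldsymbol{M}^{(\nu)}_u)^{\dagger}\bigr](\boldsymbol{M}^{(\nu)}_t)^{-\dagger}$ solves (\ref{matrixhuapickrelldiffusion}) via the very It\^o computation you outline for $\boldsymbol{V}_t$, and then applies the time-reversal Lemma~\ref{timereversal} to pass from $\boldsymbol{M}^{(\nu)}$ to $\boldsymbol{M}^{(-\nu)}$, whereas you time-reverse first and verify the SDE afterward. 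The ``quadratic-variation match'' you flag as the main obstacle is exactly the paper's L\'evy-characterization step showing that $d\boldsymbol{\Gamma}_t:=\bigl[-\tfrac{1}{\sqrt{2}}d\boldsymbol{W}_t\tilde{\boldsymbol{X}}_t+\tfrac{1}{\sqrt{2}}d\boldsymbol{B}_t\bigr]\bigl((\boldsymbol{I}+\tilde{\boldsymbol{X}}_t^2)/2\bigr)^{-1/2}$ is a complex Brownian matrix, carried out entry-wise in the proof of Proposition~\ref{explicitsolution}.
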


\paragraph{Acknowledgements} I would like to thank Jon Warren for some very useful conversations. Financial support from EPSRC through the MASDOC DTC grant number EP/HO23364/1 is gratefully acknowledged.

\section{Preliminaries, Auxiliary results and Proofs of Theorems}

As in the introduction, we denote by $\left(\boldsymbol{M}^{(\nu)}_t;t \ge 0\right)$ the matrix analogue of the exponential of complex Brownian motion with drift $\nu$ (and diffusivity $\frac{1}{\sqrt{2}}$), starting from $\boldsymbol{M}^{(\nu)}_0=\boldsymbol{I}$,
\begin{align*}
d\boldsymbol{M}^{(\nu)}_t&=\frac{1}{\sqrt{2}}\boldsymbol{M}^{(\nu)}_td\boldsymbol{W}_t+\nu \boldsymbol{M}^{(\nu)}_tdt,\\
d\left(\boldsymbol{M}^{(\nu)}_t\right)^{\dagger}&=\frac{1}{\sqrt{2}}d\boldsymbol{W}_t^{\dagger}\left(\boldsymbol{M}^{(\nu)}_t\right)^{\dagger}+\nu \left(\boldsymbol{M}^{(\nu)}_t\right)^{\dagger}dt.
\end{align*}
A simple application of It\^{o}'s formula gives the following SDE for $\left(\det\left(\boldsymbol{M}^{(\nu)}_t\right);t \ge 0\right)$,
\begin{align*}
d\det\left(\boldsymbol{M}^{(\nu)}_t\right)=\det\left(\boldsymbol{M}^{(\nu)}_t\right)\left(\frac{1}{\sqrt{2}}\textnormal{tr}\left(d\boldsymbol{W}_t\right)+\nu N  dt\right).
\end{align*}
Solving it, we get,
\begin{align*}
\det\left(\boldsymbol{M}^{(\nu)}_t\right)=\exp\left(\frac{1}{\sqrt{2}}\textnormal{tr}\left(\boldsymbol{W}_t\right)+\nu N t\right).
\end{align*}
Thus, $\left(\boldsymbol{M}^{(\nu)}_t;t \ge 0\right)$ is almost surely invertible. Moreover, by applying It\^{o}'s formula to the identity $\boldsymbol{M}^{(\nu)}_t\left(\boldsymbol{M}^{(\nu)}_t\right)^{-1}=\boldsymbol{I}$, we easily obtain the following description of the dynamics of its inverse $\left(\left(\boldsymbol{M}^{(\nu)}_t\right)^{-1};t \ge 0\right)$,
\begin{align*}
d\left(\boldsymbol{M}^{(\nu)}_t\right)^{-1}&=-\frac{1}{\sqrt{2}}d\boldsymbol{W}_t\left(\boldsymbol{M}^{(\nu)}_t\right)^{-1}-\nu \left(\boldsymbol{M}^{(\nu)}_t\right)^{-1}dt,\\
d\left(\boldsymbol{M}^{(\nu)}_t\right)^{-\dagger}&=-\frac{1}{\sqrt{2}}\left(\boldsymbol{M}^{(\nu)}_t\right)^{-\dagger}d\boldsymbol{W}_t^{\dagger}-\nu \left(\boldsymbol{M}^{(\nu)}_t\right)^{-\dagger}dt.
\end{align*}

We will also need the notion and a precise description of the evolution of the \textit{time-reversal} of $\left(\boldsymbol{M}^{(\nu)}_t;t\ge 0\right)$. For $T\ge 0 $ fixed, we will denote this time-reversed process by $\left(\boldsymbol{N}^{(\nu)}_t;0 \le t \le T\right)=\left(\left(\boldsymbol{M}^{(\nu)}_T\right)^{-1}\boldsymbol{M}^{(\nu)}_{T-t};0 \le t \le T\right)$. Then, we have the following lemma.
\begin{lem}\label{timereversal}
$\left(\boldsymbol{N}^{(\nu)}_t;0 \le t \le T\right)$ satisfies,
 \begin{align*}
 d\boldsymbol{N}^{(\nu)}_t=\frac{1}{\sqrt{2}}\boldsymbol{N}^{(\nu)}_td\tilde{\boldsymbol{W}}_t-\nu\boldsymbol{N}^{(\nu)}_tdt,
 \end{align*}
 for a complex Brownian matrix $\tilde{\boldsymbol{W}}$. In particular, it is distributed as $\left(\boldsymbol{M}^{(-\nu)}_t;0 \le t \le T\right)$ starting from $\boldsymbol{I}$.
\end{lem}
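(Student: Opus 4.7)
The strategy is to identify the correct time-reversed driving Brownian matrix and then exploit the multiplicative cocycle structure of $\left(\boldsymbol{M}^{(\nu)}_t\right)$ to derive the SDE for $\boldsymbol{N}^{(\nu)}$ from finite differences. First, I would define $\tilde{\boldsymbol{W}}_t:=\boldsymbol{W}_{T-t}-\boldsymbol{W}_T$ for $t\in[0,T]$; standard time-reversibility of complex Brownian motion shows that $\left(\tilde{\boldsymbol{W}}_t;0\le t\le T\right)$ is itself a complex Brownian matrix with respect to its natural filtration $\mathcal{G}_t:=\sigma\!\left(\boldsymbol{W}_u-\boldsymbol{W}_T;\,u\in[T-t,T]\right)$.

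For $s\le u$ introduce the multiplicative increment $\Phi_{s,u}:=\left(\boldsymbol{M}^{(\nu)}_s\right)^{-1}\boldsymbol{M}^{(\nu)}_u$. Pathwise uniqueness for the SDE defining $\boldsymbol{M}^{(\nu)}$ gives the cocycle identity $\Phi_{s_1,s_3}=\Phi_{s_1,s_2}\Phi_{s_2,s_3}$ for $s_1\le s_2\le s_3$, and shows that $u\mapsto\Phi_{s,u}$ is the unique solution, starting from $\boldsymbol{I}$ at time $s$, of the very same SDE as $\boldsymbol{M}^{(\nu)}$; in particular $\Phi_{s,u}$ is measurable with respect to the $\boldsymbol{W}$-increments on $[s,u]$. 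One immediately checks that $\boldsymbol{N}^{(\nu)}_t=\Phi_{T-t,T}^{-1}$, so $\boldsymbol{N}^{(\nu)}_t$ is $\mathcal{G}_t$-measurable.

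For $h>0$ small, the cocycle identity applied at $s_1=T-t-h$, $s_2=T-t$, $s_3=T$ yields
\[
\boldsymbol{N}^{(\nu)}_{t+h}=\Phi_{T-t,T}^{-1}\,\Phi_{T-t-h,T-t}^{-1}=\boldsymbol{N}^{(\nu)}_t\,\Phi_{T-t-h,T-t}^{-1}.
\]
I would then expand $\Phi_{T-t-h,T-t}$ to first order in $h$. An entrywise computation (consistent with the one carried out in the paper for $\det\boldsymbol{M}^{(\nu)}$) gives $d\boldsymbol{W}_t\,d\boldsymbol{W}_t=\boldsymbol{0}$, so the Itô correction vanishes and one obtains
\[
\Phi_{T-t-h,T-t}^{-1}-\boldsymbol{I}=-\tfrac{1}{\sqrt{2}}\left(\boldsymbol{W}_{T-t}-\boldsymbol{W}_{T-t-h}\right)-\nu\boldsymbol{I}\,h+o(h)=\tfrac{1}{\sqrt{2}}\left(\tilde{\boldsymbol{W}}_{t+h}-\tilde{\boldsymbol{W}}_t\right)-\nu\boldsymbol{I}\,h+o(h),
\]
using $\tilde{\boldsymbol{W}}_{t+h}-\tilde{\boldsymbol{W}}_t=\boldsymbol{W}_{T-t-h}-\boldsymbol{W}_{T-t}$. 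Combining these two displays produces the stated SDE
\[
d\boldsymbol{N}^{(\nu)}_t=\tfrac{1}{\sqrt{2}}\boldsymbol{N}^{(\nu)}_t\,d\tilde{\boldsymbol{W}}_t-\nu\boldsymbol{N}^{(\nu)}_t\,dt,
\]
and the final distributional claim then follows from strong uniqueness: $\boldsymbol{M}^{(-\nu)}$ solves the same linear matrix SDE starting from $\boldsymbol{I}$ and driven by a complex Brownian matrix.

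The main delicate point is to promote the finite-difference argument above into a rigorous Itô identity in the filtration $(\mathcal{G}_t)$. The key observation that makes this clean is that $\boldsymbol{N}^{(\nu)}_t$ and the small increment $\Phi_{T-t-h,T-t}$ are built from $\boldsymbol{W}$ on the disjoint intervals $[T-t,T]$ and $[T-t-h,T-t]$, hence by independence of Brownian increments are independent; this kills any cross-term quadratic covariation between the two factors in the product $\boldsymbol{N}^{(\nu)}_t\,\Phi_{T-t-h,T-t}^{-1}$ and lets one pass to the limiting stochastic differential. The remaining verification, that $\tilde{\boldsymbol{W}}$ really is a $(\mathcal{G}_t)$-Brownian matrix, is standard and follows from the stationary independent increments of $\boldsymbol{W}$.
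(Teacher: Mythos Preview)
Your argument is correct and rests on the same two ingredients as the paper's proof: the time-reversed driver $\tilde{\boldsymbol{W}}_t=\boldsymbol{W}_{T-t}-\boldsymbol{W}_T$ is again a complex Brownian matrix, and the It\^o correction that would normally appear upon reversing time vanishes because $d\boldsymbol{W}\,d\boldsymbol{W}=\boldsymbol{0}$ for a complex Brownian matrix. The packaging differs, however. You use the multiplicative flow (cocycle) identity $\boldsymbol{N}^{(\nu)}_{t+h}=\boldsymbol{N}^{(\nu)}_t\,\Phi_{T-t-h,T-t}^{-1}$ and expand the small right factor, invoking independence of $\boldsymbol{W}$-increments on disjoint intervals to kill cross-terms. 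The paper instead writes the integral equation $\boldsymbol{M}^{(\nu)}_T=\boldsymbol{M}^{(\nu)}_{T-t}+\tfrac{1}{\sqrt{2}}\int_{T-t}^{T}\boldsymbol{M}^{(\nu)}_u\,d\boldsymbol{W}_u+\nu\int_{T-t}^{T}\boldsymbol{M}^{(\nu)}_u\,du$, left-multiplies by $(\boldsymbol{M}^{(\nu)}_T)^{-1}$, and then converts the backward stochastic integral into a forward It\^o integral against $\tilde{\boldsymbol{W}}$ via a Riemann-sum argument (citing Franchi--Le Jan); this produces a bracket correction $\int_0^t d\langle\boldsymbol{N}^{(\nu)},\tilde{\boldsymbol{W}}\rangle_u$, which is then shown to vanish since $\langle\tilde{\boldsymbol{W}},\tilde{\boldsymbol{W}}\rangle=0$. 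Your version makes the flow structure and the role of independent increments more visible; the paper's version is immediately rigorous once the cited Riemann-sum lemma is invoked, which is exactly the step you flag as the delicate point in promoting your finite-difference computation to a genuine It\^o identity.
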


Furthermore, we have the following result for the rate of growth of $\left(\boldsymbol{M}^{(-\nu)}_t;t \ge 0\right)$ as $t \to \infty$; this ensures the convergence of the various matrix integrals we have encountered under the assumption $\Re(s)>-\frac{1}{2}$.
\begin{lem}\label{growth}
Let $\left(\eta^{(-\nu)}_1(t) \le \cdots \le \eta^{(-\nu)}_N(t);t \ge 0 \right)$ denote the squared singular values of $\left(\boldsymbol{M}^{(-\nu)}_t;t \ge 0\right)$. Then, almost surely,
\begin{align*}
\lim_{t \to \infty}\frac{1}{t}\log \eta_N^{(-\nu)}(t) \le -2\nu +N-1.
\end{align*}
In particular, if $\nu=\Re(s)+\frac{N}{2}$ for $\Re(s)>-\frac{1}{2}$ we have,
\begin{align*}
\lim_{t \to \infty}\frac{1}{t}\log \eta_N^{(-\nu)}(t) <0
\end{align*}
and hence, for any matrix norm $\| \cdot \|$ we have,
\begin{align*}
\lim_{t \to \infty}\frac{1}{t}\log\left\| \boldsymbol{M}^{(-\nu)}_t \right\| <0 , \textnormal{ almost surely}.
\end{align*}
\end{lem}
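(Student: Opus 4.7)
The plan is to reduce the bound to a one-dimensional analysis by testing $\boldsymbol{M}^{(-\nu)}_t$ against a fixed vector. For any fixed nonzero column vector $\boldsymbol{V}\in\mathbb{C}^N$, set $\phi_t(\boldsymbol{V})=\boldsymbol{V}^{\dagger}\boldsymbol{M}^{(-\nu)}_t(\boldsymbol{M}^{(-\nu)}_t)^{\dagger}\boldsymbol{V}=\|(\boldsymbol{M}^{(-\nu)}_t)^{\dagger}\boldsymbol{V}\|^2$. Since the determinant computation in the excerpt already guarantees that $\boldsymbol{M}^{(-\nu)}_t$ is almost surely invertible for every $t$, $\phi_t(\boldsymbol{V})$ stays strictly positive and its logarithm is well defined. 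The first step is to derive the It\^o SDE satisfied by $\boldsymbol{Y}_t=\boldsymbol{M}^{(-\nu)}_t(\boldsymbol{M}^{(-\nu)}_t)^{\dagger}$ using the given SDEs for $\boldsymbol{M}^{(-\nu)}_t$ and $(\boldsymbol{M}^{(-\nu)}_t)^{\dagger}$ together with the standard It\^o rules for the complex Brownian matrix entries; this yields $d\boldsymbol{Y}_t=(\text{martingale part})+(N-2\nu)\boldsymbol{Y}_t\,dt$. Sandwiching between $\boldsymbol{V}^{\dagger}$ and $\boldsymbol{V}$ and carrying out one further quadratic-variation calculation shows that the scalar martingale part of $d\phi_t(\boldsymbol{V})$ has bracket exactly $2\phi_t(\boldsymbol{V})^2\,dt$.

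The second step is to apply It\^o's formula to $\log\phi_t(\boldsymbol{V})$. The logarithmic correction contributes $-\tfrac{1}{2}\cdot 2=-1$ to the drift, giving $d\log\phi_t(\boldsymbol{V})=dN_t+(-2\nu+N-1)\,dt$ with $N_t$ a continuous local martingale satisfying $\langle N\rangle_t=2t$. A Dambis--Dubins--Schwarz time-change identifies $N_t$ with a Brownian motion at time $2t$, and the SLLN gives $N_t/t\to 0$ almost surely, whence $\tfrac{1}{t}\log\phi_t(\boldsymbol{V})\to -2\nu+N-1$ almost surely for every fixed $\boldsymbol{V}$.

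The third step is to pass from fixed vectors to the top squared singular value. Applied to the finite collection $\boldsymbol{V}=e_1,\ldots,e_N$, intersecting the almost-sure events yields simultaneous convergence, and the elementary bound $\eta_N^{(-\nu)}(t)\le\textnormal{tr}(\boldsymbol{Y}_t)=\sum_{k=1}^{N}\phi_t(e_k)$, together with the fact that $\log$ of a sum of $N$ positive numbers exceeds $\log$ of the maximum by at most $\log N$, gives $\limsup_{t\to\infty}\tfrac{1}{t}\log\eta_N^{(-\nu)}(t)\le -2\nu+N-1$. The two consequences in the lemma are then immediate: substituting $\nu=\Re(s)+\tfrac{N}{2}$ with $\Re(s)>-\tfrac{1}{2}$ yields $-2\nu+N-1=-2\Re(s)-1<0$, and the equivalence of all matrix norms on $N\times N$ matrices transfers this decay from the operator norm $\sqrt{\eta_N^{(-\nu)}}$ to an arbitrary norm.

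The main technical point is the derivation of the exact drift $-2\nu+N-1$. The constant $-1$ is essential: working instead with $\log\textnormal{tr}(\boldsymbol{Y}_t)$ and bounding $\textnormal{tr}(\boldsymbol{Y}_t^2)\ge N^{-1}\textnormal{tr}(\boldsymbol{Y}_t)^2$ via Cauchy--Schwarz would only produce $-1/N$, which is insufficient to guarantee strict negativity throughout the range $\Re(s)>-\tfrac{1}{2}$. Projecting onto a one-dimensional subspace is what sharpens the estimate, since then the martingale bracket is exactly $2\phi_t(\boldsymbol{V})^2\,dt$ and the It\^o correction cleanly cancels this into a full $-1$ upon taking $\log$.
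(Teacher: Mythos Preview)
Your proof is correct and takes a genuinely different, more elementary route than the paper. The paper derives the closed SDE for $\boldsymbol{Z}_t=\boldsymbol{M}^{(-\nu)}_t(\boldsymbol{M}^{(-\nu)}_t)^{\dagger}$, then invokes the eigenvalue-evolution machinery of \cite{MatrixYamadaWatanabe} to obtain the interacting system for the $\eta_i$, changes variables to $\delta_i=\log\eta_i$, and controls $\delta_N$ by a comparison argument: $\delta_1$ is dominated by a drifting Brownian motion with slope $-2\nu+1-N$, and each gap $\delta_{i+1}-\delta_i$ is dominated by a one-dimensional diffusion with linear-in-$t$ growth rate $2$, summing to $-2\nu+N-1$. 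Your approach bypasses the eigenvalue SDEs and the comparison entirely: projecting onto a fixed vector $\boldsymbol{V}$ reduces everything to a single scalar geometric-Brownian-motion-type process, and the It\^o correction delivers the drift $-2\nu+N-1$ directly. This is cleaner and in fact slightly stronger, since bounding $\eta_N$ above and below by $\textnormal{tr}(\boldsymbol{Y}_t)$ and $\textnormal{tr}(\boldsymbol{Y}_t)/N$ shows the limit \emph{equals} $-2\nu+N-1$, not merely is bounded by it. The paper's approach, in return, yields finer information about all the singular values simultaneously (in particular the Lyapunov-spectrum picture), which your scalar projection does not see.
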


It is a remarkable fact, that the solution of (\ref{matrixhuapickrelldiffusion}), for any initial condition $\boldsymbol{X}_0$, can be written out explicitly:

\begin{prop}\label{explicitsolution}
With $\nu=\Re(s)+\frac{N}{2}, \mu=\sqrt{2}\Im(s)$, then the unique strong solution of (\ref{matrixhuapickrelldiffusion}), starting from $\boldsymbol{X}_0 \in \textbf{H}(N)$ is given explicitly by,
\begin{align}
\left(\boldsymbol{M}^{(\nu)}_t\right)^{-1}\left[\boldsymbol{X}_0+\int_{0}^{t}\boldsymbol{M}^{\left(\nu\right)}_u\left(\frac{d\boldsymbol{B}^{(\mu)}_u+d\left(\boldsymbol{B}^{(\mu)}_u\right)^{\dagger}}{\sqrt{2}}\right)\left(\boldsymbol{M}^{\left(\nu\right)}_u\right)^{\dagger}\right]\left(\boldsymbol{M}^{(\nu)}_t\right)^{-\dagger}.
\end{align}
\end{prop}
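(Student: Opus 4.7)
The strategy is to apply the matrix It\^{o} product rule to $\boldsymbol{Y}_t := \boldsymbol{A}_t \boldsymbol{Z}_t \boldsymbol{C}_t$, where I set $\boldsymbol{A}_t := (\boldsymbol{M}^{(\nu)}_t)^{-1}$, $\boldsymbol{C}_t := (\boldsymbol{M}^{(\nu)}_t)^{-\dagger}$, and $\boldsymbol{Z}_t := \boldsymbol{X}_0 + \int_0^t \boldsymbol{M}^{(\nu)}_u \left(\frac{d\boldsymbol{B}^{(\mu)}_u + d(\boldsymbol{B}^{(\mu)}_u)^{\dagger}}{\sqrt 2}\right)(\boldsymbol{M}^{(\nu)}_u)^{\dagger}$. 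The SDEs for $\boldsymbol{A}_t$ and $\boldsymbol{C}_t$ are already recorded at the start of the section, $d\boldsymbol{Z}_t$ is explicit from the definition, and the independence of $\boldsymbol{B}$ from $\boldsymbol{W}$ kills every cross-variation between $\boldsymbol{Z}$ and $\boldsymbol{A}$ or $\boldsymbol{C}$. So the triple-product expansion reduces to
\[
d\boldsymbol{Y}_t = (d\boldsymbol{A}_t)\boldsymbol{Z}_t\boldsymbol{C}_t + \boldsymbol{A}_t(d\boldsymbol{Z}_t)\boldsymbol{C}_t + \boldsymbol{A}_t\boldsymbol{Z}_t(d\boldsymbol{C}_t) + (d\boldsymbol{A}_t)\boldsymbol{Z}_t(d\boldsymbol{C}_t),
\]
and I would multiply these out in turn.

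The three first-order pieces are essentially mechanical. Using $\boldsymbol{A}_t\boldsymbol{M}^{(\nu)}_t = \boldsymbol{I} = (\boldsymbol{M}^{(\nu)}_t)^{\dagger}\boldsymbol{C}_t$, the middle term collapses to $\frac{1}{\sqrt 2}(d\boldsymbol{B}_t + d\boldsymbol{B}_t^{\dagger}) + 2\Im(s)\boldsymbol{I}\,dt$ (using $\mu = \sqrt 2\,\Im(s)$), while the outer two produce $-\frac{1}{\sqrt 2}d\boldsymbol{W}_t\boldsymbol{Y}_t - \nu\boldsymbol{Y}_t\,dt$ and $-\frac{1}{\sqrt 2}\boldsymbol{Y}_t d\boldsymbol{W}_t^{\dagger} - \nu\boldsymbol{Y}_t\,dt$ respectively. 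The single surviving quadratic covariation is $\frac{1}{2}\,d\boldsymbol{W}_t\boldsymbol{Y}_t d\boldsymbol{W}_t^{\dagger}$; the covariance structure of the independent complex Brownian entries of $\boldsymbol{W}$ evaluates this, componentwise, to $\textnormal{Tr}(\boldsymbol{Y}_t)\boldsymbol{I}\,dt$. Combined with $2\nu = N + 2\Re(s)$, the total drift is exactly $[-(N + 2\Re(s))\boldsymbol{Y}_t + 2\Im(s)\boldsymbol{I} + \textnormal{Tr}(\boldsymbol{Y}_t)\boldsymbol{I}]\,dt$, matching the drift in (\ref{matrixhuapickrelldiffusion}). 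Notably, the trace drift term, which might look ad hoc in the SDE, appears here naturally from the cross-variation.

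The remaining task is to reconcile the martingale part
\[
d\boldsymbol{N}_t = -\tfrac{1}{\sqrt 2}d\boldsymbol{W}_t\boldsymbol{Y}_t - \tfrac{1}{\sqrt 2}\boldsymbol{Y}_t d\boldsymbol{W}_t^{\dagger} + \tfrac{1}{\sqrt 2}(d\boldsymbol{B}_t + d\boldsymbol{B}_t^{\dagger})
\]
with the prescribed form $d\tilde{\boldsymbol{\Gamma}}_t\boldsymbol{S}_t + \boldsymbol{S}_t d\tilde{\boldsymbol{\Gamma}}_t^{\dagger}$, where $\boldsymbol{S}_t := \sqrt{(\boldsymbol{I} + \boldsymbol{Y}_t^2)/2}$. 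I would compute the entrywise quadratic covariations of $d\boldsymbol{N}_t$ (using independence of $\boldsymbol{W}$ and $\boldsymbol{B}$ and the fact that $\boldsymbol{Y}_t$ is Hermitian) and check that they agree with those of the candidate form. Strong uniqueness for (\ref{matrixhuapickrelldiffusion}) follows from the Lipschitz property of $\boldsymbol{X} \mapsto \sqrt{(\boldsymbol{I}+\boldsymbol{X}^2)/2}$ on $\textbf{H}(N)$ (the matrix inside the root is bounded below by $\boldsymbol{I}$), together with the affine drift, so pathwise uniqueness upgrades the covariation match into an identification of laws.

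The step I expect to be the main obstacle is this final identification of a driving complex Brownian matrix $\tilde{\boldsymbol{\Gamma}}$. The equation $d\boldsymbol{\Gamma}\boldsymbol{S} + \boldsymbol{S} d\boldsymbol{\Gamma}^{\dagger} = d\boldsymbol{N}$ is underdetermined, since the anti-Hermitian part of $d\boldsymbol{\Gamma}$ is unconstrained, so extracting an honest complex Brownian matrix requires either a canonical choice (possibly after enlarging the probability space with an auxiliary independent Brownian matrix supplying the free anti-Hermitian part) or, more conceptually, recasting the argument as a martingale problem: verify that $\boldsymbol{Y}_t$ has the same generator as (\ref{matrixhuapickrelldiffusion}), which by the Lipschitz well-posedness has a unique weak solution. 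That uniqueness, combined with uniqueness in law, then delivers the identification of $\boldsymbol{Y}_t$ with the (unique) strong solution.
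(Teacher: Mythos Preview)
Your It\^{o} triple-product expansion is exactly the paper's argument, down to the independence of $\boldsymbol{B}$ and $\boldsymbol{W}$ killing the $\boldsymbol{A}$--$\boldsymbol{Z}$ and $\boldsymbol{Z}$--$\boldsymbol{C}$ cross terms and the evaluation $\tfrac{1}{2}\,d\boldsymbol{W}_t\boldsymbol{Y}_t\,d\boldsymbol{W}_t^{\dagger}=\textnormal{Tr}(\boldsymbol{Y}_t)\boldsymbol{I}\,dt$. The only divergence is the step you flag as the obstacle, the recovery of a driving complex Brownian matrix $\boldsymbol{\Gamma}$, and here you are making it harder than it is. Your martingale increment is not merely a Hermitian object from which $\boldsymbol{\Gamma}$ must be disentangled; it arrives already split as a complex piece plus its adjoint,
\[
d\boldsymbol{N}_t=\Bigl[-\tfrac{1}{\sqrt 2}\,d\boldsymbol{W}_t\boldsymbol{Y}_t+\tfrac{1}{\sqrt 2}\,d\boldsymbol{B}_t\Bigr]+\Bigl[-\tfrac{1}{\sqrt 2}\,d\boldsymbol{W}_t\boldsymbol{Y}_t+\tfrac{1}{\sqrt 2}\,d\boldsymbol{B}_t\Bigr]^{\dagger},
\]
so the paper simply \emph{defines} $d\boldsymbol{\Gamma}_t:=\bigl[-\tfrac{1}{\sqrt 2}\,d\boldsymbol{W}_t\boldsymbol{Y}_t+\tfrac{1}{\sqrt 2}\,d\boldsymbol{B}_t\bigr]\boldsymbol{S}_t^{-1}$ (the inverse exists since $\boldsymbol{S}_t^2\ge\tfrac12\boldsymbol{I}$) and checks, by a direct entrywise computation and L\'evy's characterization, that $(d\boldsymbol{\Gamma}_t)_{ij}(d\boldsymbol{\Gamma}_t)_{i'j'}=0$ and $(d\boldsymbol{\Gamma}_t)_{ij}\overline{(d\boldsymbol{\Gamma}_t)}_{i'j'}=2\delta_{i,i'}\delta_{j,j'}\,dt$. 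The ``underdetermination'' you worry about only arises if you forget the two summands and try to invert the Hermitian sum; since both halves are available on the original probability space, no auxiliary Brownian motion and no detour through the martingale problem is needed. Strong uniqueness the paper imports from \cite{HuaPickrell} rather than arguing Lipschitz directly, but your reasoning there is fine.
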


The final ingredient that we will make use of is the following.

\begin{prop}\label{uniqueinvariant}
Let $\Re(s)>- \frac{1}{2}$. Then, the unique strong solution $\left(\boldsymbol{X}_t;t \ge 0\right)$ to (\ref{matrixhuapickrelldiffusion}) has $\mathsf{M}_{HP}^{s,N}$ as its unique invariant measure.
\end{prop}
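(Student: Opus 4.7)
My plan is to use the unitary covariance of (\ref{matrixhuapickrelldiffusion}) to reduce invariance of $\mathsf{M}_{HP}^{s,N}$ to a statement about the induced eigenvalue diffusion on the Weyl chamber $W^N$, verify that statement by a direct Fokker--Planck computation, and then conclude uniqueness via standard ergodicity. The key observation is $U(N)$-equivariance: if $\boldsymbol{X}_t$ solves (\ref{matrixhuapickrelldiffusion}) driven by a complex Brownian matrix $\boldsymbol{\Gamma}_t$, then for any fixed $U\in U(N)$, the process $U\boldsymbol{X}_tU^{\dagger}$ solves the same SDE driven by $U\boldsymbol{\Gamma}_tU^{\dagger}$, which is equal in law to $\boldsymbol{\Gamma}_t$; the deterministic drift in (\ref{matrixhuapickrelldiffusion}) depends only on $\boldsymbol{X}_t$, $\boldsymbol{I}$, and $\textnormal{Tr}(\boldsymbol{X}_t)$, and so is manifestly covariant. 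Every invariant probability measure is therefore $U(N)$-invariant and is determined by its marginal on the ordered eigenvalues.

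I then apply the Dyson approach in the instantaneous eigenbasis of $\boldsymbol{X}_t$, in which $\sqrt{(\boldsymbol{I}+\boldsymbol{X}_t^2)/2}$ is diagonal with entries $\sqrt{(1+\lambda_i^2)/2}$. Computing the diagonal entries $(d\boldsymbol{X}_t)_{ii}$ and the quadratic variations $d\langle (\boldsymbol{X}_t)_{ij},(\boldsymbol{X}_t)_{ji}\rangle$ for $i\ne j$ coming from the symmetrized noise, It\^{o}'s formula for eigenvalues yields an autonomous SDE of the schematic form
$$d\lambda_i=\sigma(\lambda_i)\,d\beta_i+b_i(\lambda)\,dt,\qquad i=1,\ldots,N,$$
for independent standard real Brownian motions $\beta_i$, where $\sigma(\lambda_i)$ is a constant multiple of $\sqrt{1+\lambda_i^2}$ and $b_i(\lambda)$ decomposes as a Dyson-type repulsion (reflecting the off-diagonal structure of $\sqrt{(\boldsymbol{I}+\boldsymbol{X}^2)/2}$ sitting on both sides of $d\boldsymbol{\Gamma}$ and $d\boldsymbol{\Gamma}^{\dagger}$) plus the diagonal of the linear and trace drifts from (\ref{matrixhuapickrelldiffusion}). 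I then verify the stationary Fokker--Planck equation for the radial density $\rho$ appearing in (\ref{HuaPickrellmeasuresevals}), namely that $b_i=\tfrac{1}{2}\partial_{\lambda_i}\sigma^2(\lambda_i)+\tfrac{1}{2}\sigma^2(\lambda_i)\,\partial_{\lambda_i}\log\rho$; using
$$\partial_{\lambda_i}\log\rho(\lambda)=2\sum_{j\ne i}\frac{1}{\lambda_i-\lambda_j}-\frac{2(\Re(s)+N)\lambda_i-2\Im(s)}{1+\lambda_i^2},$$
this reduces to an algebraic identity that closes precisely because of the specific coefficient of $\boldsymbol{X}_t$ and the presence of the trace drift in (\ref{matrixhuapickrelldiffusion}).

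Uniqueness follows from standard considerations: the eigenvalue diffusion is uniformly elliptic on the open chamber; the condition $\Re(s)>-\tfrac{1}{2}$ together with the log-gas repulsion prevents both boundary attainment and explosion; and a H\"{o}rmander/support-theorem argument then delivers a unique invariant law on $W^N$. Combining this with the uniform angular part (on the flag manifold $U(N)/T^N$) dictated by unitary covariance yields $\mathsf{M}_{HP}^{s,N}$ as the unique invariant measure on $\textbf{H}(N)$. I expect the main obstacle to be the Dyson derivation itself: one must carefully track the It\^{o} corrections produced by the off-diagonal entries of $\sqrt{(\boldsymbol{I}+\boldsymbol{X}^2)/2}$ appearing on both sides of the noise, and then verify that the resulting repulsion, linear part, and trace drift assemble into exactly the logarithmic derivative of $\rho$.
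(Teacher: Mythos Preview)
Your plan is sound and follows the same first move as the paper: exploit $U(N)$-covariance of (\ref{matrixhuapickrelldiffusion}) to reduce the question to the eigenvalue diffusion on $W^N$, and then argue on that radial part. The paper also records the explicit eigenvalue SDE (via Theorem~4 of \cite{MatrixYamadaWatanabe}), which matches what your Dyson computation would produce.

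Where you diverge is in how invariance of $\mu_{HP}^{s,N}$ for the radial process is verified. You propose a direct $N$-dimensional Fokker--Planck check, namely confirming the detailed-balance identity $b_i=\tfrac12\partial_{\lambda_i}\sigma^2+\tfrac12\sigma^2\,\partial_{\lambda_i}\log\rho$; this does close algebraically with the explicit coefficients (and your formula for $\partial_{\lambda_i}\log\rho$ is correct). The paper instead observes a structural fact: the eigenvalue semigroup has the Karlin--McGregor/$h$-transform determinantal form $P_{HP}^{s,N}(t)(x,dy)=e^{-ct}\frac{\Delta_N(y)}{\Delta_N(x)}\det\bigl(p_t^{s,N}(x_i,y_j)\bigr)\,dy$, which collapses the invariance check to a \emph{one}-dimensional reversibility statement for the generator $L^{(N)}_s$ with speed measure $m_s^{(N)}$. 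Your route is more elementary and self-contained; the paper's route trades a short computation for an appeal to the determinantal machinery, but yields the Karlin--McGregor description as a byproduct.

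One point you pass over quickly deserves care: you assert that $\Re(s)>-\tfrac12$ plus log-gas repulsion ``prevents both boundary attainment and explosion.'' The paper does not treat this as obvious either; it invokes the strong-solution and non-collision results of \cite{Graczyk} (via Lemma~4.3 of \cite{HuaPickrell}) even for degenerate initial data. If you want your proposal to be complete, you should either cite an analogous result or supply the Feller boundary/comparison argument explicitly, since your uniqueness step (ellipticity on the open chamber plus a H\"ormander/support argument) relies on the process living in $W^N$ for all time. The paper's alternative uniqueness argument---hypoellipticity of $\partial_t-\mathsf{G}_{\boldsymbol{X}}^*$ coming from uniform positive definiteness of the diffusion matrix on $\textbf{H}(N)$---sidesteps the chamber analysis entirely and may be the cleaner way to finish.
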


We are now in position to quickly prove our two main results.

\begin{proof}[Proof of Theorem \ref{MatrixBougerolTheorem}]
This follows immediately from Proposition \ref{explicitsolution}, by making the change of variables $u\mapsto t-u$, using the time-reversal Lemma \ref{timereversal} for $\left(\left(\boldsymbol{M}^{(\nu)}_t\right)^{-1}\boldsymbol{M}^{\left(\nu\right)}_{t-u};0 \le u \le t\right)$ and finally noting invariance under time-reversal of the matrix Brownian motion $\boldsymbol{B}$.
\end{proof}

\begin{proof}[Proof of Theorem \ref{FunctionalHuaPickrell}]
From Theorem \ref{MatrixBougerolTheorem} we have that,
\begin{align}
\tilde{\boldsymbol{X}}_t^{\mu, \nu}\overset{law}{=}\int_{0}^{t}\boldsymbol{M}^{\left(-\nu\right)}_u\left(\frac{d\boldsymbol{B}^{(\mu)}_u+d\left(\boldsymbol{B}^{(\mu)}_u\right)^{\dagger}}{\sqrt{2}}\right)\left(\boldsymbol{M}^{\left(-\nu\right)}_u\right)^{\dagger}.
\end{align}
Moreover, by Lemma \ref{growth} we have that for $\Re(s)>-\frac{1}{2}$ almost surely,
\begin{align*}
\int_{0}^{t}\boldsymbol{M}^{\left(-\nu\right)}_u\left(\frac{d\boldsymbol{B}^{(\mu)}_u+d\left(\boldsymbol{B}^{(\mu)}_u\right)^{\dagger}}{\sqrt{2}}\right)\left(\boldsymbol{M}^{\left(-\nu\right)}_u\right)^{\dagger} \underset{t \to \infty}{\longrightarrow} \int_{0}^{\infty}\boldsymbol{M}^{\left(-\nu\right)}_u\left(\frac{d\boldsymbol{B}^{(\mu)}_u+d\left(\boldsymbol{B}^{(\mu)}_u\right)^{\dagger}}{\sqrt{2}}\right)\left(\boldsymbol{M}^{\left(-\nu\right)}_u\right)^{\dagger}.
\end{align*}
Thus, we have the following convergence in law (in fact for any initial condition $\boldsymbol{X}_0 \in \textbf{H}(N)$ and not just for the $\boldsymbol{0}$ matrix),
\begin{align*}
\tilde{\boldsymbol{X}}_t^{\mu, \nu}\underset{t \to \infty}{\overset{\textnormal{law}}{\longrightarrow}}\int_{0}^{\infty}\boldsymbol{M}^{\left(-\nu\right)}_u\left(\frac{d\boldsymbol{B}^{(\mu)}_u+d\left(\boldsymbol{B}^{(\mu)}_u\right)^{\dagger}}{\sqrt{2}}\right)\left(\boldsymbol{M}^{\left(-\nu\right)}_u\right)^{\dagger}.
\end{align*}
But by Proposition \ref{uniqueinvariant}, for $\Re(s)>-\frac{1}{2}$, $\mathsf{M}_{HP}^{s,N}$ is the unique invariant probability measure of (\ref{matrixhuapickrelldiffusion}) and so,
\begin{align*}
\int_{0}^{\infty}\boldsymbol{M}^{\left(-\nu\right)}_u\left(\frac{d\boldsymbol{B}^{(\mu)}_u+d\left(\boldsymbol{B}^{(\mu)}_u\right)^{\dagger}}{\sqrt{2}}\right)\left(\boldsymbol{M}^{\left(-\nu\right)}_u\right)^{\dagger} \textnormal{is distributed as } \mathsf{M}_{HP}^{s,N}.
\end{align*}
\end{proof}
\section{Proofs of auxiliary results}

\begin{proof}[Proof of Proposition \ref{explicitsolution}]
The fact that (\ref{matrixhuapickrelldiffusion}) has a unique strong solution has been proven in Section 8 of \cite{HuaPickrell} (by a standard argument found also in \cite{Wishart} and \cite{Doumerc} for example). It suffices to check that,
\begin{align*}
\left(\boldsymbol{M}^{(\nu)}_t\right)^{-1}\left[\boldsymbol{X}_0+\int_{0}^{t}\boldsymbol{M}^{\left(\nu\right)}_u\left(\frac{d\boldsymbol{B}^{(\mu)}_u+d\left(\boldsymbol{B}^{(\mu)}_u\right)^{\dagger}}{\sqrt{2}}\right)\left(\boldsymbol{M}^{\left(\nu\right)}_u\right)^{\dagger}\right]\left(\boldsymbol{M}^{(\nu)}_t\right)^{-\dagger}
\end{align*}
indeed solves (\ref{matrixhuapickrelldiffusion}) for $\nu=\Re(s)+\frac{N}{2}, \mu=\sqrt{2}\Im(s)$. The initial condition is immediate and in order to ease notation, we will suppress any dependence on it in what follows. Let $\tilde{\boldsymbol{X}}_t^{\mu, \nu}$ denote the expression above. Then, applying It\^{o}'s formula we get,
\begin{align*}
d\tilde{\boldsymbol{X}}_t^{\mu, \nu}&=d\left(\left(\boldsymbol{M}^{(\nu)}_t\right)^{-1}\right)\left[\boldsymbol{X}_0+\int_{0}^{t}\boldsymbol{M}^{\left(\nu\right)}_u\left(\frac{d\boldsymbol{B}^{(\mu)}_u+d\left(\boldsymbol{B}^{(\mu)}_u\right)^{\dagger}}{\sqrt{2}}\right)\left(\boldsymbol{M}^{\left(\nu\right)}_u\right)^{\dagger}\right]\left(\boldsymbol{M}^{(\nu)}_t\right)^{-\dagger}\\
&+\left(\boldsymbol{M}^{(\nu)}_t\right)^{-1}\left[\boldsymbol{X}_0+\int_{0}^{t}\boldsymbol{M}^{\left(\nu\right)}_u\left(\frac{d\boldsymbol{B}^{(\mu)}_u+d\left(\boldsymbol{B}^{(\mu)}_u\right)^{\dagger}}{\sqrt{2}}\right)\left(\boldsymbol{M}^{\left(\nu\right)}_u\right)^{\dagger}\right]d\left(\left(\boldsymbol{M}^{(\nu)}_t\right)^{-\dagger}\right)\\
&+\left(\boldsymbol{M}^{(\nu)}_t\right)^{-1}\left[\boldsymbol{M}^{\left(\nu\right)}_t\left(\frac{d\boldsymbol{B}^{(\mu)}_t+d\left(\boldsymbol{B}^{(\mu)}_t\right)^{\dagger}}{\sqrt{2}}\right)\left(\boldsymbol{M}^{\left(\nu\right)}_t\right)^{\dagger}\right]\left(\boldsymbol{M}^{(\nu)}_t\right)^{-\dagger}\\
&+d\left(\left(\boldsymbol{M}^{(\nu)}_t\right)^{-1}\right)\left[\boldsymbol{X}_0+\int_{0}^{t}\boldsymbol{M}^{\left(\nu\right)}_u\left(\frac{d\boldsymbol{B}^{(\mu)}_u+d\left(\boldsymbol{B}^{(\mu)}_u\right)^{\dagger}}{\sqrt{2}}\right)\left(\boldsymbol{M}^{\left(\nu\right)}_u\right)^{\dagger}\right]d\left(\left(\boldsymbol{M}^{(\nu)}_t\right)^{-\dagger}\right).
\end{align*}
Note that, the terms of the form,
\begin{align*}
d\left(\left(\boldsymbol{M}^{(\nu)}_t\right)^{-1}\right)\left[\boldsymbol{M}^{\left(\nu\right)}_t\left(\frac{d\boldsymbol{B}^{(\mu)}_t+d\left(\boldsymbol{B}^{(\mu)}_t\right)^{\dagger}}{\sqrt{2}}\right)\left(\boldsymbol{M}^{\left(\nu\right)}_t\right)^{\dagger}\right]\left(\boldsymbol{M}^{(\nu)}_t\right)^{-\dagger}=0,\\
\left(\boldsymbol{M}^{(\nu)}_t\right)^{-1}\left[\boldsymbol{M}^{\left(\nu\right)}_t\left(\frac{d\boldsymbol{B}^{(\mu)}_t+d\left(\boldsymbol{B}^{(\mu)}_t\right)^{\dagger}}{\sqrt{2}}\right)\left(\boldsymbol{M}^{\left(\nu\right)}_t\right)^{\dagger}\right]d\left(\left(\boldsymbol{M}^{(\nu)}_t\right)^{-\dagger}\right)=0,
\end{align*}
by independence of $\boldsymbol{B}$ and the driving Brownian motion $\boldsymbol{W}$ of $\boldsymbol{M}^{(\nu)}$. Moreover, using the fact that for a (scalar) complex Brownian motion $\beta$ we have the following quadratic covariation rules: $d\beta d \beta =0, d\beta d \bar{\beta}=2$; we easily obtain (we will do a similar and more complicated calculation below) for a matrix $\boldsymbol{A}$ and matricial complex Browian motion $\boldsymbol{W}$,
\begin{align*}
d\boldsymbol{W}_t \boldsymbol{A} d\boldsymbol{W}^{\dagger}_t=2\textnormal{Tr}(\boldsymbol{A})\boldsymbol{I}dt.
\end{align*}
Hence,
\begin{align*}
&d\tilde{\boldsymbol{X}}_t^{\mu, \nu}=-\frac{1}{\sqrt{2}}d\boldsymbol{W}_t\tilde{\boldsymbol{X}}_t^{\mu, \nu}+\frac{d\boldsymbol{B}_t}{\sqrt{2}}-\frac{1}{\sqrt{2}}\tilde{\boldsymbol{X}}_t^{\mu, \nu}d\boldsymbol{W}^{\dagger}_t+\frac{d\boldsymbol{B}^{\dagger}_{t}}{\sqrt{2}}-2\nu \tilde{\boldsymbol{X}}_t^{\mu, \nu}dt+\frac{2}{\sqrt{2}}\mu \boldsymbol{I} dt+\textnormal{Tr}(\tilde{\boldsymbol{X}}_t^{\mu, \nu})\boldsymbol{I}dt.\\
&=\left[-\frac{1}{\sqrt{2}}d\boldsymbol{W}_t\tilde{\boldsymbol{X}}_t^{\mu, \nu}+\frac{d\boldsymbol{B}_t}{\sqrt{2}}\right]\left(\frac{\boldsymbol{I}+\left(\tilde{\boldsymbol{X}}_t^{\mu, \nu}\right)^2}{2}\right)^{-\frac{1}{2}}\sqrt{\frac{\boldsymbol{I}+\left(\tilde{\boldsymbol{X}}_t^{\mu, \nu}\right)^2}{2}}+\sqrt{\frac{\boldsymbol{I}+\left(\tilde{\boldsymbol{X}}_t^{\mu, \nu}\right)^2}{2}}\left(\frac{\boldsymbol{I}+\left(\tilde{\boldsymbol{X}}_t^{\mu, \nu}\right)^2}{2}\right)^{-\frac{1}{2}}\left[-\frac{1}{\sqrt{2}}d\boldsymbol{W}_t\tilde{\boldsymbol{X}}_t^{\mu, \nu}+\frac{d\boldsymbol{B}_t}{\sqrt{2}}\right]\\
&-2\nu \tilde{\boldsymbol{X}}_t^{\mu, \nu}dt+\frac{2}{\sqrt{2}}\mu \boldsymbol{I} dt+\textnormal{Tr}(\tilde{\boldsymbol{X}}_t^{\mu, \nu})\boldsymbol{I}dt.
\end{align*}
Writing, $d\boldsymbol{\Gamma}_t=\left[-\frac{1}{\sqrt{2}}d\boldsymbol{W}_t\tilde{\boldsymbol{X}}_t^{\mu, \nu}+\frac{d\boldsymbol{B}_t}{\sqrt{2}}\right]\left(\frac{\boldsymbol{I}+\left(\tilde{\boldsymbol{X}}_t^{\mu, \nu}\right)^2}{2}\right)^{-\frac{1}{2}}$ and then using Levy's characterization and $(d\boldsymbol{\Gamma}_t)_{ij}(d\boldsymbol{\Gamma}_t)_{i'j'}=0$, $(d\boldsymbol{\Gamma}_t)_{ij}(\bar{d\boldsymbol{\Gamma}_t)}_{i'j'}=2\delta_{i,i'}\delta_{j,j'}dt$ we deduce that $\left(\boldsymbol{\Gamma}_t;t \ge 0\right)$ is a complex Brownian matrix. The fact that $(d\boldsymbol{\Gamma}_t)_{ij}(d\boldsymbol{\Gamma}_t)_{i'j'}=0$ is immediate; to check $(d\boldsymbol{\Gamma}_t)_{ij}(\bar{d\boldsymbol{\Gamma}_t)}_{i'j'}=2\delta_{i,i'}\delta_{j,j'}dt$, writing $\boldsymbol{Y}_t=\left(\boldsymbol{I}+\left(\tilde{\boldsymbol{X}}_t^{\mu, \nu}\right)^2\right)^{-\frac{1}{2}}$ we have,
\begin{align*}
(d\boldsymbol{\Gamma}_t)_{ij}(\bar{d\boldsymbol{\Gamma}_t)}_{i'j'}&=\left(\sum_{k,l}^{}-d\boldsymbol{W}^{i,k}_t\left(\tilde{\boldsymbol{X}}_t^{\mu, \nu}\right)^{kl}\boldsymbol{Y}^{lj}_t+\sum_{l}^{}d\boldsymbol{B}_t^{il}\boldsymbol{Y}_t^{lj}\right)\left(\sum_{k',l'}^{}-d\bar{\boldsymbol{W}}^{i',k'}_t\overline{\left(\tilde{\boldsymbol{X}}_t^{\mu, \nu}\right)^{k'l'}}\bar{\boldsymbol{Y}}_t^{l'j'}+\sum_{l'}^{}d\bar{\boldsymbol{B}}_t^{i'l'}\bar{\boldsymbol{Y}}_t^{l'j'}\right)\\
&=\left[2 \delta_{i,i'}\sum_{k,k',l,l'}^{}\delta_{k,k'}\left(\tilde{\boldsymbol{X}}_t^{\mu, \nu}\right)^{kl}\boldsymbol{Y}^{lj}_t\overline{\left(\tilde{\boldsymbol{X}}_t^{\mu, \nu}\right)^{k'l'}}\bar{\boldsymbol{Y}}_t^{l'j'}+2\delta_{i,i'}\sum_{l,l'}^{}\delta_{l,l'}\boldsymbol{Y}_t^{lj}\bar{\boldsymbol{Y}}_t^{l'j'}\right]dt\\
&=\left[2 \delta_{i,i'}\sum_{l,l'}^{}\left[\sum_{k}^{}\left(\tilde{\boldsymbol{X}}_t^{\mu, \nu}\right)^{kl}\overline{\left(\tilde{\boldsymbol{X}}_t^{\mu, \nu}\right)^{kl'}}\right]\boldsymbol{Y}^{lj}_t\bar{\boldsymbol{Y}}_t^{l'j'}+2\delta_{i,i'}\sum_{l,l'}^{}\delta_{l,l'}\boldsymbol{Y}_t^{lj}\bar{\boldsymbol{Y}}_t^{l'j'}\right]dt\\
&=\left[2 \delta_{i,i'}\sum_{l,l'}^{}\left[\sum_{k}^{}\left(\tilde{\boldsymbol{X}}_t^{\mu, \nu}\right)^{kl}\left(\tilde{\boldsymbol{X}}_t^{\mu, \nu}\right)^{l'k}\right]\boldsymbol{Y}^{lj}_t\boldsymbol{Y}^{j'l'}_t+2\delta_{i,i'}\sum_{l,l'}^{}\delta_{l,l'}\boldsymbol{Y}_t^{lj}\boldsymbol{Y}_t^{j'l'}\right]dt\\
&=\left[2 \delta_{i,i'}\sum_{l,l'}^{}\boldsymbol{Y}^{j'l'}_t\left[\left(\tilde{\boldsymbol{X}}_t^{\mu, \nu}\right)^2_{l'l}+\boldsymbol{I}_{l'l}\right]\boldsymbol{Y}^{lj}_t\right]dt\\
&=\left[2 \delta_{i,i'}\left(\boldsymbol{Y}_t\left(\boldsymbol{I}+\left(\tilde{\boldsymbol{X}}_t^{\mu, \nu}\right)^2\right)\boldsymbol{Y}_t\right)_{j'j}\right]dt=2 \delta_{i,i'}\delta_{j,j'}dt,
\end{align*}
where we have used the fact that both $\tilde{\boldsymbol{X}}_t^{\mu, \nu}$ and $\boldsymbol{Y}_t$ are Hermitian in the fourth equality. Thus,
\begin{align*}
d\tilde{\boldsymbol{X}}_t^{\mu, \nu}=d\boldsymbol{\Gamma}_t\sqrt{\frac{\boldsymbol{I}+\left(\tilde{\boldsymbol{X}}_t^{\mu, \nu}\right)^2}{2}}+\sqrt{\frac{\boldsymbol{I}+\left(\tilde{\boldsymbol{X}}_t^{\mu, \nu}\right)^2}{2}}d\boldsymbol{\Gamma}^{\dagger}_t-2\nu \tilde{\boldsymbol{X}}_t^{\mu, \nu}dt+\frac{2}{\sqrt{2}}\mu \boldsymbol{I} dt+\textnormal{Tr}(\tilde{\boldsymbol{X}}_t^{\mu, \nu})\boldsymbol{I}dt.
\end{align*}
Finally, to match with (\ref{matrixhuapickrelldiffusion}), we just need to take $\nu=\Re(s)+\frac{N}{2},\mu=\sqrt{2}\Im(s)$.
\end{proof}

\begin{proof}[Proof of Proposition \ref{uniqueinvariant}]
This has already been observed in Section 8 of \cite{HuaPickrell}. The argument goes as follows. Let $\mathbb{U}(N)$ denote the $N \times N$ unitary group. Then, by $\mathbb{U}(N)$-invariance of the law of the dynamics of (\ref{matrixhuapickrelldiffusion}) (invariance under conjugation, $x \mapsto U^{\dagger}xU$, for $U \in \mathbb{U}(N)$), it suffices to show that its spectral evolution, denoted by $\left(x_1(t),\cdots x_N(t);t \ge 0\right)$ has $\mu^{s,N}_{HP}(dx)$ as its unique invariant probability measure. Using Theorem 4 of \cite{MatrixYamadaWatanabe} for example, we obtain that $\left(x_1(t),\cdots x_N(t);t \ge 0\right)$ follows the stochastic differential system,
\begin{align*}
dx_i(t)=\sqrt{2(1+x^2_i(t))}d\beta_i(t)+\left(2\Im(s)+\left(2-2N-2\Re(s)\right)x_i(t)+\sum_{j\ne i}^{}\frac{2\left(1+x^2_i(t)\right)}{x_i(t)-x_j(t)}\right)dt, \ 1\le i \le N,
\end{align*}
for some independent standard (real) Brownian motions $\{\beta_i\}_{i=1}^N$. It was proven in Lemma 4.3 of \cite{HuaPickrell}, using the general results of \cite{Graczyk}, that this system of SDEs has a unique strong solution, with no explosions or collisions, even if started from a degenerate point (when $x_i(0)=x_j(0)$ for $i\ne j$). Let $\left(P_{HP}^{s,N}(t); t \ge 0\right)$ denote the Markov semigroup associated with it. Then, checking invariance $\mu^{s,N}_{HP}P_{HP}^{s,N}(t)=\mu^{s,N}_{HP}, t\ge0 $ is particularly simple, since the argument becomes essentially one-dimensional. This is because the kernel, $P_{HP}^{s,N}(t)(x,dy)$ in $W^N$, of the semigroup $\left(P_{HP}^{s,N}(t); t \ge 0\right)$ has a determinantal structure, given by an \textit{h-transform} of a \textit{Karlin-McGregor} semigroup. Namely,
\begin{align*}
P_{HP}^{s,N}(t)(x,dy)=e^{-ct}\frac{\Delta_N(x)}{\Delta_N(y)}\det\left(p_t^{s,N}(x_i,y_j)\right)^N_{i,j=1}dy_1\cdots dy_N
\end{align*}
where $x=(x_1,\cdots,x_N)$, $y=(y_1,\cdots,y_N)$, $p_t^{s,N}(z,w)$ is the strictly positive transition density, with respect to Lebesgue measure in $\mathbb{R} $, of the one-dimensional diffusion process with generator,
\begin{align*}
L^{(N)}_s=(w^2+1)\frac{d^2}{dw^2}+\left[\left(2-2N-2\Re(s)\right)w+2\Im(s)\right]\frac{d}{dw},
\end{align*}
which is furthermore, reversible with respect to the measure,
\begin{align*}
m_s^{(N)}(w)dw=(1+w^2)^{-\Re(s)-N}e^{2\Im(s)\textnormal{arg}(1+iw)}dw
\end{align*}
and finally $c$ is a constant. Invariance and uniqueness of $\mu_{HP}^{s,N}$ then follow easily. The reader is referred to Proposition 4.4 of \cite{HuaPickrell} for the details.

We can alternatively argue for uniqueness of the invariant measure $\mathsf{M}_{HP}^{s,N}(d\boldsymbol{X})$,  by noting that the diffusion matrix of (\ref{matrixhuapickrelldiffusion}) is uniformly positive definite, from which we deduce (see \cite{Stroock} for example) that if $\mathsf{G}_{\boldsymbol{X}}$ denotes the generator of the unique solution of (\ref{matrixhuapickrelldiffusion}), then $\partial_t-\mathsf{G}^*_{\boldsymbol{X}}$ is hypoelliptic.
\end{proof}

\begin{proof}[Proof of Lemma \ref{timereversal}]
Let $T>0$ be fixed. For $0\le t \le T$, we have that,
\begin{align*}
\boldsymbol{M}^{(\nu)}_T=\boldsymbol{M}^{(\nu)}_{T-t}+\int_{T-t}^{T}\boldsymbol{M}^{(\nu)}_{u}d\boldsymbol{W}_u+\nu \int_{T-t}^{T}\boldsymbol{M}^{(\nu)}_{u}du.
\end{align*}
Hence, by multiplying by $\left(\boldsymbol{M}^{(\nu)}_{T}\right)^{-1}$ and making the change of variables $u \mapsto T-u$ in the Lebesgue integral,
\begin{align*}
\boldsymbol{N}^{(\nu)}_t&=\boldsymbol{I}-\int_{T-t}^{T}\left(\boldsymbol{M}^{(\nu)}_{T}\right)^{-1}\boldsymbol{M}^{(\nu)}_{u}d\boldsymbol{W}_u-\nu \int_{T-t}^{T}\left(\boldsymbol{M}^{(\nu)}_{T}\right)^{-1}\boldsymbol{M}^{(\nu)}_{u}du\\
&=\boldsymbol{I}-\int_{T-t}^{T}\left(\boldsymbol{M}^{(\nu)}_{T}\right)^{-1}\boldsymbol{M}^{(\nu)}_{u}d\boldsymbol{W}_u-\nu \int_{0}^{t}\boldsymbol{N}^{(\nu)}_udu.
\end{align*}
Now, to treat the stochastic integral term, begin by writing $\tilde{\boldsymbol{W}}_t=\boldsymbol{W}_{T-t}-\boldsymbol{W}_T$ for the time-reversed Brownian motion. We note that, this is again a Brownian motion with filtration given by,
\begin{align*}
\mathcal{F}^{\tilde{\boldsymbol{W}}}_{r,s}=\sigma\left(\tilde{\boldsymbol{W}}_s-\tilde{\boldsymbol{W}}_r|r \le s \le T\right)=\sigma\left(\boldsymbol{W}_u|T-s \le u \le T-r\right)=\mathcal{F}^{\boldsymbol{W}}_{T-s,T-r}.
\end{align*}
Using an approximation by Riemann sums, see for example Proposition 7.2.11 of \cite{FranchiLeJan} where this is done, we can write the stochastic integral in consideration as an It\^{o} integral with respect to the time-reversed Brownian motion $\tilde{\boldsymbol{W}}$, namely,
\begin{align*}
\int_{T-t}^{T}\left(\boldsymbol{M}^{(\nu)}_{T}\right)^{-1}\boldsymbol{M}^{(\nu)}_{u}d\boldsymbol{W}_u=-\int_{0}^{t}\boldsymbol{N}^{(\nu)}_ud\tilde{\boldsymbol{W}}_u-\int_{0}^{t}d\left\langle\boldsymbol{N}^{(\nu)},\tilde{\boldsymbol{W}}\right\rangle_{u}.
\end{align*}
Observe that, the martingale part of $d\boldsymbol{N}^{(\nu)}$ is $-\boldsymbol{N}^{(\nu)}d\tilde{\boldsymbol{W}}$ and thus,
\begin{align*}
\int_{0}^{t}d\left\langle\boldsymbol{N}^{(\nu)},\tilde{\boldsymbol{W}}\right\rangle_{u}=\int_{0}^{t}\boldsymbol{N}^{(\nu)}_ud\left\langle\tilde{\boldsymbol{W}},\tilde{\boldsymbol{W}}\right\rangle_{u}=0,
\end{align*}
since we are dealing with complex Brownian motions (in case we were working with real Brownian matrices we would have picked up an extra drift term). The result then follows.
\end{proof}

\begin{proof}[Proof of Lemma \ref{growth}]
This is essentially an adaptation of Lemma 11 of \cite{RiderValko}. We consider the following stochastic process $\left(\boldsymbol{Z}_t^{(-\nu)};t \ge 0\right)=\left(\boldsymbol{M}_t^{(-\nu)}\left(\boldsymbol{M}_t^{(-\nu)}\right)^{\dagger};t \ge 0\right)$. By developing $d\left(\boldsymbol{M}_t^{(-\nu)}\left(\boldsymbol{M}_t^{(-\nu)}\right)^{\dagger}\right)$ we get the following closed matrix SDE,
\begin{align*}
d\boldsymbol{Z}_t^{(-\nu)}=\frac{1}{\sqrt{2}}\sqrt{\boldsymbol{Z}_t^{(-\nu)}}d\boldsymbol{W}_t\sqrt{\boldsymbol{Z}_t^{(-\nu)}}+\frac{1}{\sqrt{2}}\sqrt{\boldsymbol{Z}_t^{(-\nu)}}d\boldsymbol{W}^{\dagger}_t\sqrt{\boldsymbol{Z}_t^{(-\nu)}}+(N-2\nu)\boldsymbol{Z}_t^{(-\nu)}dt, 
\end{align*}
for a complex matrix Brownian motion $\left(\boldsymbol{W}_t;t \ge 0\right)$. By Theorem 4 of \cite{MatrixYamadaWatanabe} the eigenvalue evolution $\left(\eta^{(-\nu)}_1(t) \le \cdots \le \eta^{(-\nu)}_N(t);t \ge 0 \right)$ of $\left(\boldsymbol{M}_t^{(-\nu)}\left(\boldsymbol{M}_t^{(-\nu)}\right)^{\dagger};t \ge 0\right)$, which form the squared singular values of $\left(\boldsymbol{M}_t^{(-\nu)};t \ge 0\right)$, satisfies,
\begin{align*}
d\eta_i^{(-\nu)}(t)=\sqrt{2}\eta_i^{(-\nu)}(t)d\beta_i(t)+\left[\left(N-2 \nu\right)\eta_i^{(-\nu)}(t)+\sum_{k \ne i}^{}\frac{2\eta_i^{(-\nu)}(t)\eta_k^{(-\nu)}(t)}{\eta_i^{(-\nu)}(t)-\eta_k^{(-\nu)}(t)}\right]dt, \ 1 \le i \le N,
\end{align*}
for some independent standard (real) Brownian motions $\{\beta_i\}_{i=1}^N$. Moreover, by making the change of variables $\delta_i^{(-\nu)}=\log\left(\eta_i^{(-\nu)}\right)$ we arrive at,
\begin{align*}
d\delta_i^{(-\nu)}(t)&=\sqrt{2}d\beta_i(t)+\left[\left(N-1-2 \nu\right)+\sum_{k \ne i}^{}\frac{2e^{\delta_k^{(-\nu)}(t)}}{e^{\delta_i^{(-\nu)}(t)}-e^{\delta_k^{(-\nu)}(t)}}\right]dt\\
&=\sqrt{2}d\beta_i(t)+\left[-2\nu+\sum_{k \ne i}^{}\frac{e^{\delta_i^{(-\nu)}(t)}+e^{\delta_k^{(-\nu)}(t)}}{e^{\delta_i^{(-\nu)}(t)}-e^{\delta_k^{(-\nu)}(t)}}\right]dt\ 1 \le i \le N.
\end{align*}
As in the proof of Lemma 11 of \cite{RiderValko}, we observe the following: First,
\begin{align*}
\sum_{k \ne 1}^{}\frac{e^{\delta_1^{(-\nu)}(t)}+e^{\delta_k^{(-\nu)}(t)}}{e^{\delta_1^{(-\nu)}(t)}-e^{\delta_k^{(-\nu)}(t)}}\le 1-N
\end{align*}
and furthermore, that changing $i$ to $i+1$ the interaction term changes by at most,
\begin{align*}
2\frac{e^{\delta_{i+1}^{(-\nu)}(t)}+e^{\delta_{i}^{(-\nu)}(t)}}{e^{\delta_{i+1}^{(-\nu)}(t)}-e^{\delta_i^{(-\nu)}(t)}}.
\end{align*}
Thus, for $i=1, \dots, N-1$, the difference $\delta_{i+1}^{(-\nu)}-\delta_i^{(-\nu)}$ is bounded above by the solution of,
\begin{align*}
dy_i(t)=\sqrt{2}\left(d\beta_{i+1}(t)-d\beta_i(t)\right)+2\left(\frac{1+e^{-y_i(t)}}{1-e^{-y_{i}(t)}}\right)dt
\end{align*}
and similarly, $\delta_1^{(-\nu)}$ by the solution of,
\begin{align*}
d\tilde{\delta}_1^{(-\nu)}(t)=\sqrt{2}d\beta_1(t)+(-2\nu+1-N)dt.
\end{align*}
Hence,
\begin{align*}
\lim_{t \to \infty}\frac{\delta_N^{(-\nu)}(t)}{t}\le \lim_{t \to \infty}\frac{\tilde{\delta}_1^{(-\nu)}(t)}{t}+\sum_{i=1}^{N-1}\lim_{t \to \infty}\frac{y_i(t)}{t}
= (-2 \nu+1-N)+2(N-1) \textnormal{ almost surely.}
\end{align*}
\end{proof}

\bigskip
\noindent
{\sc Mathematics Institute, University of Warwick, Coventry CV4 7AL, U.K.}\newline
\href{mailto:T.Assiotis@warwick.ac.uk}{\small T.Assiotis@warwick.ac.uk}

\end{document}